\newcommand{\cd}{\cdot}
\newcommand{\ra}{\rightarrow}
\newcommand{\pr}{\prime}
\newcommand{\R}{\mathbb{R}}
\newcommand{\Z}{\mathbb{Z}}
\newcommand{\abs}[1]{\left\lvert #1 \right\rvert}
\newcommand{\lbar}[1]{\overline{#1}}
\DeclareMathOperator{\dist}{dist}
\renewcommand{\coprod}{\rotatebox[origin = c]{180}{$\prod$}}
\newtheorem{thm}{Theorem}
\newtheorem{lemma}[thm]{Lemma}
\theoremstyle{definition}
\newtheorem*{defin*}{Definition}
\theoremstyle{remark}
\newtheorem*{note}{Note}
\begin{document}

\title{Packing Meets Topology}

\author{Michael H. Freedman}
\address{\hskip-\parindent Michael H. Freedman, Center of Mathematical Sciences and Applications, Harvard University}
\date{\today}

\begin{abstract}
	This note initiates an investigation of packing links into a region of Euclidean space to achieve a maximal density subject to geometric constraints. The upper bounds obtained apply only to the class of homotopically essential links and even there seem extravagantly large, leaving much working room for the interested reader.
\end{abstract}

\maketitle

\section{Introduction and Theorems}
Optimal packing of balls into Euclidean space has a long history and recent astonishing successes, including Hale's resolution of the Kepler conjecture \cite{hales17} and the optimality of the $E_8$ and Leech lattices \cites{viazovska17,CKMRV17}, resulting in a 2022 Fields Medal to Maryna Viazovska.

In this note, we introduce the idea of packing links, rather than points, again with the goal of achieving the highest possible density subject to the geometric constraints that \emph{certain} link components must maintain a distance $\geq \epsilon$ from \emph{certain} other components. There will be an observation about higher dimensions but let us begin with packing classical links into Euclidean 3-space. In the classical sphere packing problem, \emph{all} points are constrained to have distance $\geq \epsilon$ from each other. The analogous stipulation for links, that all components must maintain a distance $\geq \epsilon$ from each other, is also of potential interest, but in that case, the coarse outline of the subject is broadly similar to point packings. That is, in both cases, each component takes up a definite amount of volume so only $O(\epsilon^{-3})$ link components can be $\epsilon$-embedded into the unit cube, where $\epsilon$-embedded means no two components approach within $\epsilon$ of each other.\footnote{We use $(O,\Omega,\Theta)$-notation, $g(x) = O(f(x))$, to mean that, for some $a > 0$ and sufficiently large $x$, $g(x) \leq af(x)$, $g(x) = \Omega(f(x))$ for the opposite inequality, and $g(x) = \Theta(f(x))$ if both inequalities hold but for different constants.} While this coarse upper bound holds for all link types, complicated links almost surely have smaller upper bounds. For example, we conjecture that if $L_n$ is the link type consisting of $n$-fibers of the Hopf map $S^3 \ra S^2$ then $n$ can grow no more quickly than $O(\epsilon^{-2})$.

However, this note focuses on a regime, \emph{partial-$\epsilon$-embeddings}, where even the coarse answer can be quite mysterious. By a partial-$\epsilon$-embedding we mean that only certain specified pairs of components must stay $\epsilon$ apart. In this context we are often left puzzled as to whether the number of link components that can fit into the unit cube is: (1) countably infinite\footnote{This case, of course, would require slightly relaxing the definition of ``embedding'' to a 1-1 map which, when restricted to any finite collection of circles, is a smooth embedding of the expected link type.}, (2) finite but unbounded, (3) exponential or super-exponential in $\epsilon^{-1}$, or (4) polynomial in $\epsilon^{-1}$.

The theme of this note is well illustrated by our first example: $H^n$, which by definition is the $2n$-component link type formed by taking $n$-Hopf links $\vphantom{l}_r$\tikz{\draw[thick] (0.09,0.12) arc (55:375:0.15);\draw[thick] (0.09,-0.04) arc (195:-120:0.15);}$\vphantom{l}_b$ each separated from the others by some smooth embedded 2-sphere. One may write the link as $H^n \coloneqq \coprod_{j=1}^n H_j$, where $H_j = r_j \cup b_j$. The partial-$\epsilon$-embedding condition we study is that for all $j$, $\dist(r_j, b_j) \geq \epsilon$. We call such an embedding a \emph{diagonal-$\epsilon$-embedding}.

\begin{thm}\label{thm:diagonal-embed}
	If $H^n$ has a diagonal-$\epsilon$-embedding into the unit cube then $n(\epsilon) = O\left(e^{a\epsilon^{-3}}\right)$ for some $a > 0$.
\end{thm}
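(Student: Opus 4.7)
My plan is a combinatorial counting argument at the $\epsilon$-scale. I partition the unit cube into a grid $\Gamma$ of $N = O(\epsilon^{-3})$ cubes of side length $\delta < \epsilon/\sqrt{3}$, so each cube has diameter strictly less than $\epsilon$. To each Hopf link $H_j = r_j \cup b_j$ I associate a signature $\sigma_j \colon \Gamma \to \{R, B, \ast\}$, labeling a cube $R$ if it meets $r_j$, $B$ if it meets $b_j$, and $\ast$ otherwise. The diagonal condition $\dist(r_j, b_j) \geq \epsilon$ together with the cube-diameter bound ensures no cube carries both labels, making $\sigma_j$ well-defined. Since there are at most $3^N = \exp(O(\epsilon^{-3}))$ possible signatures, the theorem follows once distinct Hopf links are shown to yield distinct signatures.

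The distinguishing step is the main content. Suppose $\sigma_j = \sigma_k$ for $j \ne k$: then $r_j, r_k$ both lie in the open region $U_R = \bigcup\{c : \sigma(c) = R\}$, and $b_j, b_k$ in the disjoint $U_B$. The linking matrix of the four components is the identity (Hopf linking within each pair, zero across pairs by the split hypothesis), so $[b_j], [b_k]$ realize the two generators of $H_1(\R^3 \setminus (r_j \cup r_k)) \cong \Z^2$ through the inclusion $U_B \hookrightarrow \R^3 \setminus (r_j \cup r_k)$. On the other hand, the separating 2-sphere between $H_j$ and $H_k$ bounds a 3-ball containing $H_j$ but disjoint from $r_k$, and in this ball $b_j$ is null-homologous. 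A careful topological analysis of the decomposition of $U_B$ induced by the separating sphere, combined with the linking form, should force a contradiction.

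The main obstacle lies in making the distinguishing step airtight. The subtle case is two Hopf links that are geometrically very close but topologically split by a thin 2-sphere threading through the same grid cubes; then the naive signatures genuinely agree and the argument must proceed purely topologically. A plausible workaround is to enrich the signature by additionally recording a PL approximation of an enclosing separating 2-sphere around each $H_j$: distinct Hopf links would then be enclosed by distinct combinatorial 2-spheres in $\Gamma$, whose total count is bounded by $2^{O(\epsilon^{-3})}$, still matching the theorem's bound.
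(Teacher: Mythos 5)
Your setup (the $\epsilon$-scale tiling, the three-way labeling, the $3^{O(\epsilon^{-3})}$ count) matches the paper's, but the ``distinguishing step'' is a genuine gap, and it is exactly where the content of the theorem lives. The statement you are hoping to prove --- that distinct Hopf links in the packing must yield distinct signatures --- is not something the linking form will give you, and is most likely false: if $\sigma_j=\sigma_k$, the linking data only forces $[r_j]\neq[r_k]$ in $H_1(U_R;\Z_2)$ and $[b_j]\neq[b_k]$ in $H_1(U_B;\Z_2)$, which is perfectly consistent whenever these groups have rank $\geq 2$ (e.g.\ $U_R$ a connected region through every cube of which both $r_j$ and $r_k$ pass, closing up in different homology classes). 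So no contradiction follows from a shared signature alone, and ``a careful topological analysis should force a contradiction'' cannot be completed as stated. Your proposed repair --- recording a combinatorial separating $2$-sphere in the grid --- also fails in precisely the problematic case: the true smooth sphere separating $H_j$ from $H_k$ may thread through the interiors of the very cubes that both links occupy, and it need not be isotopic (rel the links) to any subcomplex of $\Gamma$; indeed, if every cube of $U_R$ meets both $r_j$ and $r_k$ in its interior, no subcomplex sphere can separate them at all.

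The paper's resolution is to keep your coloring but \emph{decorate} it: choose a basis of $H_1(R_j;\Z_2)$ and $H_1(B_j;\Z_2)$ for each colored region, and record the coordinate vectors of $[r_j]$ and $[b_j]$ in those bases. This multiplies the count only by $2^{O(\epsilon^{-3})}\cdot 2^{O(\epsilon^{-3})}$, preserving the bound. The pigeonhole principle is then applied to \emph{decorated} colorings: if $H_i$ and $H_j$ share one, then $R_i=R_j$, $B_i=B_j$, and the homology coordinates agree, so the bilinear linking pairing $L^{pq}$ between the bases computes both $\operatorname{Link}(b_j,r_i)=0$ (splitness) and $\operatorname{Link}(b_i,r_i)=1$ (Hopf linking) as the \emph{same} number, which is the contradiction. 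You should replace your distinguishing step with this decoration; without it, the count of bare signatures does not bound $n$.
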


\begin{proof}
	Let $I^3$ be the unit cube. Tile $I^3$ by cells dual to a triangulation of $I^3$. The cells should have the property that they are somewhat regular: each cell should have an inscribed sphere of radius $> \frac{\epsilon}{20}$ and an excribed sphere of radius $< \frac{\epsilon}{2}$. These dual cells have the property that any union of them is a PL 3-manifold with boundary. We prefer not to use the obvious coordinate sub-cubes of $I^3$, because they fail to have this property.\footnote{For this first proof, the manifold property is actually not necessary, but for Theorems \ref{thm:finite-int} and \ref{thm:n-max} the manifold property is an added convenience.} The number of cells in this tiling $\tau$ is $O(\epsilon^{-3})$.

	Assume $H^n$ is diagonally-$\epsilon$-embedded. For each $j$, $1 \leq j \leq n$, 3-color the tiling $\tau$ according to the rule that a cell is red if it meets $r_j$, blue if it meets $b_j$, and white otherwise. Call this coloring $c_j$. Now we decorate $c_j$ with additional homological information. Let $R_j$ ($B_j$) be the union of the red (blue) cells under $c_j$. The first homology, $H_1(R_j; \Z_2)$ is a vector space over $\Z_2$ of dimension $d_j = O(\epsilon^{-3})$, for which we choose a basis $f_{j1}, \dots, f_{jd_j}$. Similarly, $H_1(B_j; \Z_2)$ has dimension $e_j = O(\epsilon^{-3})$ with basis $g_{j1}, \dots, g_{je_j}$. Let $L_{jpq}$ be the $d_j \times e_j$ matrix of $\Z_2$-linking numbers, $L_j^{pq} = \operatorname{Link}(f_{jp},g_{jq})$.

	Now, homologically we may express the class of $r_j$ in $H_1(R_j)$ (the class of $b_j$ in $H_1(B_j)$) as $r_j = \sum_{p=1}^{d_j} x_{jp}f_{jp}$ ($b_j = \sum_{q=1}^{e_j} y_{jq}g_{jq}$). Now the mod 2 linking numbers of the link $H_j$ can be recovered as:
	\begin{equation}\label{eq:link-numbers}
		1 = \operatorname{Link}(b_j, r_j) = L_j^{pq}x_{jp}y_{jq},
	\end{equation}
	Einstein summation convention in effect.

	Where $c_j$ was the $j^{\text{th}}$ coloring, let $\hat{c}_j$ be a decorated $j$-coloring where the decoration amounts to fixing the mod 2 numbers $\{x_p\}$, $1 \leq p \leq d_j$, and $\{y_q\}$, $1 \leq q \leq e_j$, which express, within the arbitrarily chosen bases, how $b_j$ and $r_j$ lie homologically in $H_1(B_j; \Z_2)$ and $H_1(R_j; \Z_2)$.

	How many possible decorated colorings, $\#_{\text{DC}}(\epsilon)$ can there be?
	\begin{equation}
		\#_{\text{DC}}(\epsilon) = O\left(3^{a^\pr \epsilon^{-3}} \cd 2^{a^{\pr\pr} \epsilon^{-3}} \cd 2^{a^{\pr\pr} \epsilon^{-3}}\right) = O\left(e^{a \epsilon^{-3}}\right),
	\end{equation}
	all constants $> 0$.

	The first factor bounds the number of 3-colorings and the second two factors the possible values of the binary strings $\{x_{jp}\}$ and $\{y_{jq}\}$, respectively.

	Now by the pigeonhole principle if $n(\epsilon)$ were \emph{not} $O(e^{a\epsilon^{-3}})$, two Hopf links $H_i$ and $H_j$, $i \neq j$, within $H^n$ must determine the \emph{same} decorated coloring $\hat{c}_i = \hat{c}_j$. But with $H_i$ and $H_j$ having identical thickenings: $B_i = B_j$ and $R_i = R_j$, and identical homological data. Line \ref{eq:link-numbers} can also be read as a computation for the \emph{off-diagonal} linking number:
	\begin{equation}
		0 = \operatorname{Link}(b_j, r_i) = L_j^{pq}x_{jp}y_{iq} = L_i^{pq}x_{jp}y_{jq} = L_i^{pq}x_{ip}y_{iq} = 1
	\end{equation}

	This contradiction proves the theorem.
\end{proof}

Before leaving this example, what packings can we imagine to supply a lower bound on $n(\epsilon)$, for $H^n$? The simplest starting point would be to link two circles of radius $\epsilon$ into a small, rigid, Hopf link, and then throw copies of these ``linked key rings'' into a unit box, shaking gently until full. This seems to yield $n = \Theta(\epsilon^{-3})$. But then we realize the box is not as full as we thought. We can sprinkle in a second generation of orthogonally linked pairs of radius $3\epsilon$ circles, ignoring the presence of the first generation. By ignoring the first generation, we will create many linking number $=1$ with the first generation, but these can be undone ``finger moves'' of length $\leq \epsilon$ to the second generation. By the triangle inequality, the second generation will still satisfy the diagonal-$\epsilon$-embedded condition after all finger moves. We are still not done; we can add a third generation of orthogonal radius $= 7 \epsilon$ Hopf links, which will retain the diagonal-$\epsilon$ condition after length $\leq 3$ finger moves recovers the correct link type, $H_n$. We can of course itterate with Hopf links of radius $\{r_i\}$, $r_0 = \epsilon$, $r_{i+1} = 2r_i + 1$, until $r_i$ approaches unit size. From scale considerations, but ignoring unimportant boundary effects, we see that if $n_i$ is the number of $i^{\text{th}}$ generation Hopf links in the box, then $n_0, n_1, n_2, \dots$ is dominated by the geometric series $n_0, 2^{-3}n_0, 4^{-3}n_0, \dots$. So summing this series we find that the total number $n = \sum n_i$ of the Hopf links satisfies
\begin{equation}
	n(\epsilon) < \frac{8}{7} n_0
\end{equation}

So, in the end, all our extra work only changed (slightly) the leading coefficient. Not being able to find anything more clever, this leaves the huge gap between $\Omega(\epsilon^{-3})$ and $\Omega(e^{a\epsilon^{-3}})$, in which the truth must lie. Our conjecture is that $n = \Omega(\epsilon^{-3})$, but the proof calls out for a new idea.

Before discussing other link types, let us make a quick remark regarding higher dimensions. If $d = p+q+1$, then two disjoint closed submanifolds of $\R^d$ have a well-defined mod 2 linking number if they have dimensions $p$ and $q$ respectively. Now in $\R^d$ let $H^n$ denote any link of $2n$ component $\{r_j, b_j\}$, $1 \leq j \leq n$, with mod 2 linking numbers given by:
\[
	L(r_i,r_j) = 0,\ L(b_i, b_j) = 0,\ i \neq j, \text{ and } L(r_i,b_j) = \delta_{ij}
\]

Identical reasoning shows that the maximum possible $n(\epsilon)$, $n_{\text{max}}(\epsilon)$, satisfies:
\begin{equation}
	O(\epsilon^{-d}) \leq n_{\text{max}}(\epsilon) \leq e^{a(d) \epsilon^{-d}}
\end{equation}
for some $a > 0$, which actually generalizes Theorem \ref{thm:diagonal-embed} even when $d = 3$. $n_{\text{max}}(\epsilon)$ is the largest number such that a $2n(\epsilon)$-component link can be embedded in the unit $d$-cube with the specified linking and $\dist(r_j,b_j) \geq \epsilon$, $1 \leq j \leq n$.

Returning to dimension $d=3$, let us give a further example, which steps away, slightly, from linking number. Consider the problem of packing the disjoint union (again this means smoothly embedded spheres separating the copies of) of $n$ copies $B^n$ of a three component link $B$, such as the Borromean rings, which has all linking numbers 0 and Milnor's $\lbar{\mu}$-invariant $\lbar{\mu}_{123}(L) \not\equiv 0$ mod 3 \cite{milnor54}. $B$ has components $l_1, l_2, l_3$, $B_j = (l_{j1},l_{j2},l_{j3})$. Again, colors $r_1,r_2,r_3$ are associated to the 3-components. We now enforce the diagonal-$\epsilon$-condition: for each $j$, $1 \leq j \leq n$, $\dist(l_{ji},l_{ji^\pr}) \geq \epsilon$ whenever $i \neq i^\pr$.

Let $n_B(\epsilon)$ be the largest $n$ for which such an embedding exists, or $\infty$ is no such bound exists.

\begin{thm}\label{thm:finite-int}
	For all $\epsilon > 0$, the Borromean packing number $n_B(\epsilon)$ is indeed a finite integer, with $n_B(\epsilon) = O(e^{a\epsilon^{-9}})$.
\end{thm}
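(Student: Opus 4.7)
The plan is to lift the proof of Theorem~\ref{thm:diagonal-embed} from pairwise linking number $\pmod 2$ to Milnor's triple invariant $\lbar{\mu}_{123}\pmod 3$. I would re-use the dual tiling $\tau$ of $I^3$ by $O(\epsilon^{-3})$ nicely-shaped cells (enjoying the PL 3-manifold property), and for each packed copy $B_j$ assign a 4-coloring $c_j$ that labels each cell by the unique component $l_{j,k}$ it meets (or white if none). The diagonal-$\epsilon$-condition ensures that the three thickenings $R_{j,1},R_{j,2},R_{j,3}$ are pairwise disjoint PL 3-submanifolds of $I^3$.

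Next, I would decorate $c_j$ with enough $\Z_3$-homological data to recover $\lbar{\mu}_{123}(B_j)\pmod 3$ by a trilinear formula. Choose bases $f_{j,k,\bullet}$ of $H_1(R_{j,k};\Z_3)$, canonically from the cell-set $R_{j,k}$; record the $\Z_3$-coordinates $x_{j,k,p}$ of $[l_{j,k}]$ in these bases; and record a Massey-type triple-linking tensor $M_j^{pqr}\in\Z_3$ of basis cycles, the natural analogue of the matrix $L_j^{pq}$. The tensor has $e_{j,1}\cd e_{j,2}\cd e_{j,3}=O(\epsilon^{-9})$ mod-$3$ entries, so its encoding contributes a factor $3^{O(\epsilon^{-9})}$ to the decoration count, dominating the $4^{O(\epsilon^{-3})}\cd 3^{O(\epsilon^{-3})}$ factor from colorings and coordinates. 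Thus the number of decorated colorings is $O(e^{a\epsilon^{-9}})$.

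By pigeonhole, if $n_B(\epsilon)$ exceeded $e^{a\epsilon^{-9}}$, two indices $i\neq j$ would share $\hat c_i=\hat c_j$: identical colored cell-sets, identical bases, identical tensor $M$, and identical coordinates $x$. Form the mixed link $(l_{i,1},l_{j,2},l_{j,3})$. Because the smooth sphere separating $B_i$ from $B_j$ also separates $l_{i,1}$ from $l_{j,2}\cup l_{j,3}$, the mixed link is split, hence $\lbar{\mu}_{123}(\text{mixed})\equiv 0\pmod 3$. On the other hand, the trilinear formula
\[
\lbar{\mu}_{123}=M^{pqr}\,x_{1,p}\,x_{2,q}\,x_{3,r}\pmod 3
\]
(Einstein summation) should compute $\lbar{\mu}_{123}$ from the decoration data alone, forcing $\lbar{\mu}_{123}(\text{mixed})=\lbar{\mu}_{123}(B_j)\not\equiv 0\pmod 3$. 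This contradiction would prove the theorem.

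The principal obstacle is making this trilinear Massey-style formula rigorous in the simplicial setting. One must produce Seifert 2-chains for each basis 1-cycle that jointly avoid the \emph{other} color thickenings (not merely the other basis cycles), so that an honestly trilinear $\Z_3$-valued triple product exists on the subspace of triples with vanishing pairwise linking. Verifying the existence of such surfaces---very likely after a constant-factor refinement of $\tau$ so that complementary $H_1$ is controlled purely by the coloring---and then checking that $M^{pqr}$ really matches $\lbar{\mu}_{123}$ up to the standard Massey-product indeterminacy (trivial mod $3$ when pairwise linkings vanish) is where the real work lies.
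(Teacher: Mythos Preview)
Your proposal has a genuine gap: the trilinear formula $\lbar{\mu}_{123}\equiv M^{pqr}x_{1,p}x_{2,q}x_{3,r}\pmod 3$ cannot exist, because $\lbar{\mu}_{123}$ is \emph{not} determined by the homology classes of the components in their respective thickenings. Here is a clean counterexample. Let $R_1\subset I^3$ be an unknotted genus-$2$ handlebody with $\pi_1(R_1)$ free on $a,b$, and take $l_1$ to represent the commutator $[a,b]=aba^{-1}b^{-1}$, which is null-homologous in $R_1$ (so every $x_{1,p}=0$ and your formula outputs $0$). Place unknots $l_2,l_3$ in $I^3\setminus R_1$ so that $l_2$ links the $a$-handle once, $l_3$ links the $b$-handle once, and $\operatorname{lk}(l_2,l_3)=0$. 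Then all pairwise linking numbers vanish, yet $(l_1,l_2,l_3)$ is link-homotopic to the Borromean rings, with $\lbar{\mu}_{123}=\pm 1\not\equiv 0\pmod 3$. Replacing $l_1$ by a small trivial loop $l_1'\subset R_1$ (same $H_1$-class, namely $0$) gives a split link with $\lbar{\mu}_{123}=0$. No tensor $M$ depending only on the colored regions can repair this; Massey products are notoriously \emph{not} multilinear on homology classes, and the obstacle you flag in your last paragraph is not a technicality about Seifert-chain choices but exactly this failure.

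The paper's fix is to replace the abelian decoration $[l_{j,i}]\in H_1(R_{j,i};\Z_3)$ by the conjugacy class of $l_{j,i}$ in the \emph{Burnside quotient} $\pi_1^3(R_{j,i})$, i.e.\ $\pi_1(R_{j,i})$ modulo the relations $g^3=1$ for all $g$. Burnside's theorem makes this group finite, and the Levi--van der Waerden count $\abs{B(m,3)}=3^{m+\binom{m}{2}+\binom{m}{3}}$ with $m=O(\epsilon^{-3})$ generators is precisely what produces the $3^{O(\epsilon^{-9})}$ decoration bound you were after. One then introduces ``$3$-link-homotopy'' (ordinary link-homotopy augmented by band-summing any component with a cube $g^3$ in the complement of the others) and checks two lemmas: (i) the decorated coloring determines the $3$-link-homotopy class of $B_j$, and (ii) $\lbar{\mu}_{123}\pmod 3$ is a $3$-link-homotopy invariant, by Magnus-expanding $(M_g)^3$ in the non-repeating ring $R[x_1,x_2]$ and observing that every nonconstant coefficient is divisible by $3$. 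Your pigeonhole-and-swap endgame is then exactly right; only the decoration must be upgraded from abelian to exponent-$3$ nilpotent data.
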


\begin{proof}
	We begin, as before, with a generic tessellation of $I^3$ of scale between $\frac{\epsilon}{20}$ and $\frac{\epsilon}{2}$. Now, for each $j$, $1 \leq j \leq n$, make a 4-coloring $c_j$ of $I^3$ by the rule that a cell gets the color $r_i$, $1 \leq i \leq 3$, of the component it meets; if it meets none then it is white. But now we proceed differently, for the \emph{decoration}: homology is wholly insufficient. To motivate our new decoration recall a classic:

	\begin{thm}[Burnside]
		Any finitely generated group of exponent 3, meaning every element has order $\leq 3$, is finite.
	\end{thm}
\end{proof}

\begin{note}
	To estimate $n_B(\epsilon)$ in Theorem \ref{thm:finite-int}, we used the calculation of \cite{LB33} that the order of the free, restricted Burnside group is $\abs{B(m,3)} = 3^{m + \binom{m}{2} + \binom{m}{3}}$. This will imply the bound stated in Theorem \ref{thm:finite-int}.
\end{note}

We create a bespoke invariant to exploit Burnside's theorem.

\begin{defin*}
	Define \emph{3-link-homotopy} to be Milnor's classical link-homotopy \cite{milnor54} (individual components may cross themselves during the homotopy but not other components) with the additional ad hoc relation: at any moment during the homotopy, any component may be band summed to $g^3$, where $g$ is a free loop in the complement of the other components. The cube means wrap 3 times around $g$.
\end{defin*}

Whereas before, the coloring $c_j$ was decorated with homological information, now the decoration $\hat{c}_j$ assigns to the submanifold $C_{ji}$ colored $r_i$ (according to our rule for the $j^{\text{th}}$ coloring $c_j$) the conjugacy class $[l_{ji}]$ of the component $l_{ji}$ in the Burnside group $\pi_1^3(C_{ji})$, where by definition, $\pi_1^3(X)$ means $\pi_1(X)$ with the additional relations that all elements cube to the identity.

\begin{lemma}\label{lm:3-link}
	The 3-link-homotopy class of a link $B_j$ in $I^3$ can be recovered from the decorated coloring $\hat{c}_j$.
\end{lemma}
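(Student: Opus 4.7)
The plan is to verify recoverability by showing that any two link placements $L = l_{j1}\cup l_{j2}\cup l_{j3}$ and $L' = l'_{j1}\cup l'_{j2}\cup l'_{j3}$ which induce the same decorated coloring $\hat{c}_j$ are 3-link-homotopic. The decoration by hypothesis supplies the colored submanifolds $C_{j1}, C_{j2}, C_{j3}\subset I^3$, with $l_{ji}, l'_{ji} \subset C_{ji}$, together with the equalities $[l_{ji}] = [l'_{ji}]$ in $\pi_1^3(C_{ji})$ for $i=1,2,3$.

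The crucial geometric observation is that $C_{j1}, C_{j2}, C_{j3}$ are pairwise disjoint: each cell has diameter less than $\epsilon$ (since excribed spheres have radius $<\epsilon/2$) while components of $B_j$ are pairwise $\epsilon$-separated, so no cell can meet two distinct components. In particular, $C_{ji}$ is disjoint from $\bigcup_{i'\neq i} l_{ji'}$, so any loop or band chosen inside $C_{ji}$ is automatically in the complement of the other components of $B_j$ and is therefore a legal ingredient for a 3-link-homotopy move.

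I then proceed one component at a time: fix $i$ and aim to move $l_{ji}$ onto $l'_{ji}$, holding $l_{ji'}$ fixed for $i'\neq i$. The 3-link-homotopy moves available for $l_{ji}$, performed inside $C_{ji}$, are: (a) free self-homotopy with self-crossings, which realizes any change preserving the conjugacy class of $l_{ji}$ in $\pi_1(C_{ji})$; and (b) band-sum with $g^3$ for $g$ a loop in $C_{ji}$ along a band in $C_{ji}$, which multiplies the based class of $l_{ji}$ in $\pi_1(C_{ji})$ by a conjugate $w g^{\pm 3} w^{-1}$, the conjugating word $w$ determined by the band. Because $[l_{ji}]$ and $[l'_{ji}]$ agree in the quotient $\pi_1^3(C_{ji}) = \pi_1(C_{ji})/\langle\langle x^3\rangle\rangle$, after one conjugation the quotient $[l_{ji}]^{-1}[l'_{ji}]$ in $\pi_1(C_{ji})$ is a finite product of conjugates of cubes; performing the corresponding band-sums brings $l_{ji}$ to the same element of $\pi_1(C_{ji})$ as $l'_{ji}$, and a final free self-homotopy in $C_{ji}$ lands it on $l'_{ji}$ exactly. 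Iterating over $i=1,2,3$ assembles the required 3-link-homotopy from $L$ to $L'$.

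The main point requiring care is the identification of the allowed geometric moves with the algebraic operations on $\pi_1(C_{ji})$: one must check that an arbitrary conjugated cube $w g^{\pm 3} w^{-1}$ can be realized by a single band-sum by routing the band to trace out $w$, and that all intermediate self-crossings produced during these operations can be kept inside $C_{ji}$ so that they never create crossings with the other components. Given the disjointness established in the first step, both points are routine.
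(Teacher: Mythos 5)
Your proposal is correct and follows essentially the same route as the paper's (much terser) proof: both rest on the observation that the colored regions $C_{ji}$ lie in the complement of the other components, so that free homotopy inside $C_{ji}$ realizes conjugation in $\pi_1(C_{ji})$ and band-sums with cubes realize the normal closure of cubes, whence agreement of the conjugacy classes in $\pi_1^3(C_{ji})$ forces 3-link-homotopy equivalence. Your write-up merely makes explicit the disjointness of the regions and the geometric realization of the algebraic moves, which the paper asserts without elaboration.
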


\begin{proof}
	Since $C_{ji}$ and $C_{ji^\pr}$ are disjoint for $i \neq i^\pr$, a homotopy of $L_j$ in which each component $l_{ji}$ stays within its $C_{ji^\pr}$ is a link-homotopy. Furthermore, if each $l_{ji}$ is permitted to vary in $C_{ji}$ within its $\pi_1^3(C_{ji})$ conjugacy class, this is a special case of 3-link-homotopy. Thus, if each $l_{ji}$ is rechosen within its $\pi_1^3(C_{ji})$ conjugacy class, the 3-link homotopy class is preserved.
\end{proof}

\begin{lemma}\label{lm:vanishing-ln}
	For a 3-component link with vanishing linking numbers\footnote{It is actually only necessary to assume $3 \nmid \operatorname{gcd}(\operatorname{link}(l_i,l_j))$, $i \neq j$.}, $\lbar{\mu}_{123}(B)$ is conserved mod 3 under 3-link-homotopy.
\end{lemma}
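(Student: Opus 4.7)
The plan is to interpret $\lbar{\mu}_{123}$ via Milnor's Magnus-expansion formula and check, move by move, that 3-link-homotopy changes it only by multiples of 3. Invariance under the moves of ordinary link-homotopy is classical \cite{milnor54}: $\lbar{\mu}_{123}$ is defined modulo the gcd of the pairwise linking numbers, and the hypothesis that 3 does not divide this gcd ensures that it descends to a well-defined element of $\Z/3$. So the real content lies in analyzing the new ``cubing'' move.

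Suppose one replaces a component, say $l_1$, by its band sum with $g^3$, where $g$ is a loop in $F := S^3 \setminus (l_2 \cup l_3)$. Tracking the loop produced by the band, the new component represents $l_1 \cdot (h g h^{-1})^3$ in $\pi_1(F)$, where $h$ is the path across the band; replacing $g$ by the conjugate $h g h^{-1}$ (still a free loop in $F$), I may treat the move as right-multiplication of $l_1$ by $g^3$ in $\pi_1(F)$. Applying the Magnus expansion $E:\pi_1(F) \to \Z\langle\langle X_2, X_3\rangle\rangle$ (sending the meridian of $l_j$ to $1 + X_j$) and writing $E(g) = 1 + \alpha_1 + \alpha_2 + \dots$ with $\alpha_k$ homogeneous of degree $k$, a direct expansion gives
\[
E(g^3) = E(g)^3 = 1 + 3\alpha_1 + 3(\alpha_1^2 + \alpha_2) + (\text{deg} \geq 3),
\]
so every nonconstant coefficient of $E(g^3)$ through degree 2 is divisible by 3. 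Multiplying $E(l_1)$ on the right by this series therefore shifts each coefficient of $E(l_1)$ of degree $\leq 2$ by a multiple of 3. Since $\lbar{\mu}_{123}$ is by definition a specific degree-2 coefficient of $E$ applied to (a longitudinal representative of) $l_1$, it is preserved mod 3. The symmetric cases, in which $l_2$ or $l_3$ is cubed, follow by Milnor's cyclic symmetry of $\lbar{\mu}$-invariants.

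The main difficulty is not the Magnus computation but the geometric bookkeeping around it: the band's framing, the choice of basepoint, and the conjugations induced by self-intersections of $l_1$ (permitted by ordinary link-homotopy) all perturb $l_1$'s class in $\pi_1(F)$. Verifying that none of these perturbations alters the $X_2 X_3$-coefficient modulo 3 is precisely what dictates that the indeterminacy of $\lbar{\mu}_{123}$ equals the gcd of the pairwise linking numbers, and explains why the footnote's hypothesis is exactly the minimal one needed for $\lbar{\mu}_{123} \bmod 3$ to be a bona fide invariant.
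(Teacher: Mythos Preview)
Your argument is correct and follows essentially the same route as the paper: reduce to the active component via Milnor's cyclic symmetry, note that ordinary link-homotopy leaves the Magnus expansion unchanged, and then check that band-summing with $g^3$ multiplies the expansion by $E(g)^3$, whose nonconstant terms through degree~2 are divisible by~3. The only cosmetic difference is that the paper carries out the cube in Milnor's finite non-repeating ring $R[x_1,x_2]$ (so $M_g = 1 + c_1 x_1 + c_2 x_2 + c_{12}x_1x_2 + c_{21}x_2x_1$ exactly), whereas you work with the graded pieces $\alpha_k$ in the full power-series ring; the conclusions are identical.
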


\begin{proof}
	We may assume that during the 3-link-homotopy only one component moves or is altered at any given time. The ``cyclic symmetry'' theorem (\cite{milnor57} Theorem 6) says that w.l.o.g.\ we may assume that the active component is the one being Magnus-expanded in the link group of the others. To recall, for any $k$-component link $L$, $\lbar{\mu}_I(L)$, $I = i_1,\dots,i_k$ distinct indices, is computed by expending, as below, the component $l_{i_k}$ in the polynomial ring denoted by $R[x_{i_1},\dots,x_{i_{k-1}}]$ \cite{milnor54}. This is Milnor's notation for the integers adjoined $k-1$ non-commuting variables which are also ``non-repeating,'' meaning that one divides out by the ideal generated by monomials in which any variable occurs more than once.

	\begin{equation}\label{eq:magnus-expand}
		\begin{tikzpicture}[baseline={(current bounding box.center)}]
			\node at (0,0.1) {$[l_{i_k}] \in \pi_1(I^3 \setminus (l_{i_1} \cup \cdots \cup l_{i_{k-1}}))$};
			\node[rotate around = {-90:(0,0)}] at (0,-0.4) {$\twoheadrightarrow$};
			\node at (0.55,-0.9) {$M(I^3 \setminus (l_{i_1} \cup \cdots \cup l_{i_{k-1}}))$};
			\node[rotate around = {90:(0,0)}] at (0,-1.4) {$\twoheadrightarrow$};
			\node at (2.5,-1.8) {$FM_{k-1}(m_{i-1},\dots,m_{i_{k-1}}) \xrightarrow{\text{Magnus}} R[x_{i_1},\dots,x_{i_{k-1}}]$};
			\node at (0.9,-2.5) {$m_{i_j} \mapsto 1 + x_i$};
			\node at (1,-3.05) {$m_{i_j}^{-1} \mapsto 1 - x_i$};
		\end{tikzpicture}
	\end{equation}
	where $m_{i_j}$ are meridians to $l_{i_j}$, $M$ denotes the Milnor link group obtained by adding the relations that each meridian commutes with all its conjugates, and FM is the corresponding free Milnor group generated by $m_{i_1},\dots,m_{i_{k-1}}$ subject only to these commutation relations. As the diagram indicates, $[l_{i_k}]$ is first projected, then lifted to $FM_{k-1}$, and finally expanded.

	Then by definition, $\lbar{\mu}_I(L) =$ the coefficient of $x_{i_1},\dots,x_{i_{k-1}}$ of Magnus$[l_{jk}]$. Any ambiguity in the expansion due to the choice of lifting constitutes the indeterminancy of that $\lbar{\mu}_I$. For general background on $\lbar{\mu}$ invariants see \cites{milnor54,milnor57,krushkal98}.

	As the $k^{\text{th}}$-component moves by link-homotopy the element and its expansion are constant. Adding the cube $g^3$ of a loop $g$ to $l_{jk}$ multiplies its Magnus expansion $M$ by the Magnus expansion $M_{g^3}$ of $g^3$, $M \ra MM_{g^3} = M(M_g)^3$.

	Since $B$ has 3 components, $k - 1 = 2$, $(M_g)^3$ is the cube of some monic polynomial in two variables $x_1$ and $x_2$:
	\[
		(M_g)^3 = (1 + c_1 x_1 + c_2 x_2 + c_{12}x_1x_2 + c_{21}x_2x_1)^3
	\]
	Thus, the coefficients of $x_1$, $x_2$, $x_1x_2$, and $x_2x_1$ in $(M_g)^3$ are all divisible by 3. Multiplying out we see that $\lbar{\mu}_{123}(B)$ mod 3 is invariant under 3-link-homotopy.
\end{proof}

The number $\#_c(\epsilon)$ of possible colors is $\#_c(\epsilon) = O(e^{a^\pr \epsilon^{-3}})$ and the number of decorations possible for a coloring $c$ is bounded by the product of the order of the Burnside groups $\Omega(e^{a^{\pr\pr}\epsilon^{-9}})$ for each of the colored (not white) regions. Thus, the number of possible decorated coloring $\#_{\hat{c}(\epsilon)} = \abs{\{\hat{c}(\epsilon)\}}$ has a similar bound as a function of $\epsilon$. As in Theorem \ref{thm:diagonal-embed}, the pigeonhole principle tells us that if we could place $n > \#_{\hat{c}(\epsilon)}$ copies of $B$ in $I^3$, obeying the diagonal-$\epsilon$-condition then for $1 \leq i < j \leq n$, then $B_i$ and $B_j$ will determine identical decorated colorings.

But Lemma \ref{lm:3-link} now tells us three things: $L_i$ has 3-link-homotopy type $B$, $B_j$ has 3-link-homotopy type $B$, and $B_{ij}$ has 3-link-homotopy type $B$, where $B_{ij}$ is the link obtained by starting with $B_i$ and then swapping out any one component of $B_i$ for the corresponding component of $B_j$. The first two conclusions are as we expect, but the third sounds wrong. Because $B_i$ and $B_j$ are split (separated by a smoothly embedded 2-sphere), so $B_{ij}$ is a split link and all its $\lbar{\mu}_{123}$-invariant must vanish. But this vanishing contradicts Lemma \ref{lm:vanishing-ln}, which says any link (including $B_{ij}$) in the 3-link-homotopy class of $B$ has its $\lbar{\mu}_{123}$-invariant \emph{not} congruent to 0 mod 3. This proves Theorem \ref{thm:finite-int}. \qed

Replacing Burnside groups with the mod $p$ lower central series ($p$-lcs) quotients allows a joint extension of Theorems \ref{thm:diagonal-embed} and \ref{thm:finite-int}, although with a weaker (larger) upper bound.

\begin{thm}\label{thm:n-max}
	Let $E$ be any homotopically essential link of $k$-components, $E = (e_1,\dots,e_k)$ and $E^n$ be the disjoint union of $n$ copies of $E$. For every $\epsilon > 0$ there is a largest $n$, $n_{\text{max}}$, such that $E^n$ embeds in the unit cube $I^3$ with the property that for all $j$, $1 \leq j \leq n$, $\dist(e_{ji}, e_{ji^\pr}) \geq \epsilon$ for $1 \leq i \neq i^\pr \leq k$. $n_{\text{max}} = O\left((k+1)^{a\epsilon^{-3}}p^{{(a\epsilon^{-3})}^{k-1}}\right)$, where $p$ is the smallest prime \emph{not} dividing any lowest order, nontrivial, non-repeating $\lbar{\mu}$-invariant of $E$, and $a > 0$ a fixed constant.
\end{thm}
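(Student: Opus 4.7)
The plan is to mirror the proofs of Theorems \ref{thm:diagonal-embed} and \ref{thm:finite-int}, substituting the Burnside group decoration by one based on the mod-$p$ lower central series quotients of $\pi_1$. Begin with the same generic tessellation $\tau$ of $I^3$ of scale between $\epsilon/20$ and $\epsilon/2$, producing $O(\epsilon^{-3})$ PL cells. For each copy index $j$, assign a $(k+1)$-coloring $c_j$: a cell receives color $r_i$ ($1 \leq i \leq k$) if it meets $e_{ji}$, and white otherwise. By the diagonal-$\epsilon$ hypothesis the colored submanifolds $C_{j1},\ldots,C_{jk}$ are pairwise disjoint PL 3-manifolds, and the total number of colorings is at most $(k+1)^{O(\epsilon^{-3})}$.

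For the decoration, introduce \emph{$p$-link-homotopy} as Milnor's link-homotopy together with the ad hoc move of band-summing any component with $g^p$, where $g$ is a free loop in the complement of the other components. Let $\Gamma_1^p = \pi_1(X)$ and $\Gamma_{i+1}^p = [\Gamma_i^p,\pi_1]\cd(\Gamma_i^p)^p$ be the Stallings mod-$p$ lower central series. The decorated coloring $\hat{c}_j$ records, for each $i$, the conjugacy class of $[e_{ji}]$ in $\pi_1(C_{ji})/\Gamma_k^p$. Two analogues of Lemmas \ref{lm:3-link} and \ref{lm:vanishing-ln} are then required: (L1) the $p$-link-homotopy class of $E_j$ is determined by $\hat{c}_j$, because moving each $e_{ji}$ within its $\pi_1/\Gamma_k^p$-conjugacy class in $C_{ji}$ can be realized by a sequence of ordinary link-homotopies inside $C_{ji}$, band-sums with $p$-th powers, and commutator adjustments of length $\geq k$; (L2) the lowest-order nontrivial non-repeating $\lbar{\mu}$-invariant of $E$ is preserved mod $p$ under $p$-link-homotopy. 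For (L2) the Magnus-expansion argument of Lemma \ref{lm:vanishing-ln} extends: band-summing the active component with $g^p$ replaces its Magnus expansion $M$ by $M(M_g)^p$, and in the non-repeating polynomial ring $R[x_{i_1},\ldots,x_{i_{k-1}}]$ the coefficient of every non-repeating monomial of the relevant length in $(M_g)^p$ is divisible by $p$ via the identity $\binom{p}{j} \equiv 0 \pmod p$ for $1 \leq j \leq p-1$, supplemented by a vanishing argument using non-repetition at $j = p$.

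For the counting step, Witt's formula bounds the order of the free Stallings quotient of class $k-1$ on $m$ generators by $p^{O(m^{k-1})}$, so with $m = O(\epsilon^{-3})$ the decorations contribute $p^{O((\epsilon^{-3})^{k-1})}$ per coloring, giving
\[
    \#_{\hat{c}}(\epsilon) = O\!\left((k+1)^{a\epsilon^{-3}}\,p^{(a\epsilon^{-3})^{k-1}}\right).
\]
If $n$ strictly exceeds this count, pigeonhole yields $i \neq j$ with $\hat{c}_i = \hat{c}_j$. Form the hybrid $E_{ij}$ by swapping one component of $E_i$ for the corresponding component of $E_j$. Since $E_i$ and $E_j$ inhabit disjoint 3-balls (by definition of $E^n$), $E_{ij}$ is a split link, so every $\lbar{\mu}$-invariant of $E_{ij}$ vanishes. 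On the other hand, (L1) places $E_{ij}$ in the $p$-link-homotopy class of $E$, whence (L2) forces the lowest-order non-repeating $\lbar{\mu}$-invariant of $E_{ij}$ to agree mod $p$ with that of $E$, which is nonzero by the defining property of $p$. Contradiction.

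The principal obstacle is the careful verification of (L2): one must manage the indeterminacy of $\lbar{\mu}_I$ through the passage to the mod-$p$ Magnus calculus, treat the case in which several components of $E_j$ move simultaneously via Milnor's cyclic symmetry (as invoked in Lemma \ref{lm:vanishing-ln}), and confirm that $p$-divisibility persists at exactly the monomial length needed to control the lowest-order nontrivial non-repeating invariant. A secondary but routine point is the Witt-formula bound on $|F_m/\Gamma_k^p|$ together with the verification that $\Gamma_k^p$-truncation is precisely deep enough to record the Magnus data governing the relevant $\lbar{\mu}_I$.
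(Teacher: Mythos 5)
Your overall architecture matches the paper's: tessellation, $(k+1)$-coloring, a decoration by conjugacy classes in a mod-$p$ lower central series quotient, the order bound $p^{O((\epsilon^{-3})^{k-1})}$, and the pigeonhole/hybrid-split-link contradiction. But the heart of the argument --- the choice of bespoke equivalence relation and the proof that it preserves the length-$k$ $\lbar{\mu}$-invariants mod $p$ --- is where your proposal breaks. You allow band-summing the active component with $g^p$ for an \emph{arbitrary} free loop $g$, and claim the coefficient of every relevant non-repeating monomial in $(M_g)^p=(1+A)^p=\sum_j\binom{p}{j}A^j$ dies mod $p$. The terms $1\leq j\leq p-1$ do, but the term $A^p$ survives mod $p$ and consists of monomials of degree $\geq p$; these are killed by non-repetition only when $p\geq k$. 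Since $p$ is forced on you by the link (the smallest prime not dividing the lowest-order nontrivial $\lbar{\mu}$-invariant), $p<k$ occurs routinely (e.g.\ $p=2$ for a $4$-component link with $\lbar{\mu}_{1234}$ odd), and then your (L2) is false, not merely unproven: the paper's discussion section points out the concrete instance that the Borromean rings are $2$-link-homotopically trivial. Relatedly, your (L1) is mismatched with your moves: the ambiguity left by recording $[e_{ji}]$ up to conjugacy in $\pi_1(C_{ji})/\Gamma_k^p$ is multiplication by arbitrary elements of $\Gamma_k^p$, which are generated by $aua^{-1}u^{-1}v^p$ with $u,v\in\Gamma_{k-1}^p$ --- not by arbitrary $p$-th powers together with ``commutator adjustments of length $\geq k$,'' and you never prove those adjustments preserve anything.

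The paper's fix is exactly calibrated to this: its $(p,k)$-link-homotopy permits band sums only with loops $\beta\in[\pi_1(C_k)]_k^p$, i.e.\ with precisely the ambiguity of the decoration, and its Lemma \ref{lm:fm} proves by induction on the mod-$p$ lower central series that any such $\beta$ Magnus-expands to $1+(\text{monomials of degree}\geq k)$ over $\Z_p$, hence to $1$ in the non-repeating ring on $k-1$ variables --- no condition relating $p$ to $k$ is needed. Two further points you flag as ``obstacles'' but do not resolve are handled in the paper by a reduction you omit: inducting down to the case where $E$ is almost homotopically trivial (all shorter non-repeating $\lbar{\mu}$-invariants vanish), which both removes the indeterminacy of the length-$k$ invariants and, via Milnor's cyclic symmetry, justifies tracking a single active component. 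The counting step via Witt-type bounds is fine and agrees with the paper's hands-on count.
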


\begin{proof}
	Begin in the familiar fashion by creating a $(k+1)$-coloring the cells $c_j$ of a fixed $\epsilon$-scale tessellation of $I^3$ in which each cell meeting $e_{ji}$ is colored $r_i$ and the remaining cells are colored white. Similar to Theorems \ref{thm:diagonal-embed} and \ref{thm:finite-int}, we need to specify some finite amount of data about $e_{ji}$ in its $r_i$-colored region $C_{ji}$ sufficient to (1) certify the homotopically essential nature of $E_j$ and (2) create the contradiction that some related split link $E_{jj^\pr}$, defined below, would also be homotopically essential.

	By induction, it suffices to consider the case that $E$ is almost homotopically trivial, meaning all its sub-links are all homotopically trivial, or more algebraically, that all non-repeating $\lbar{\mu}$-invariants of length $<k$ vanish.

	As in the proof of Theorem \ref{thm:finite-int}, cyclic symmetry implies that we may focus on a single ``active'' component $e_{ij_\ell}$ (and going forward drop the $j$-index for the embedding and replace $i_\ell$ by a single index $k$), project $e_{ij_\ell}$, now denoted simply by $e_k$, to the nilpotent $p$-group $\pi_1(C_{ji}) \slash [\pi_1(C_{ji})]_k^p$, where for any group $G$, $[G]_n^p$ is the $n^{\text{th}}$-term of the mod $p$ lower central series of $G$. This is defined by saying $G_1 = G$, and $G_m$ is generated ($=$ normally generated) by the words $a u a^{-1} u^{-1} v^p$, $a \in G$, and $u,v \in G_{m-1}$. It is the order of this group that enters the bound, but for link homotopy computations, it suffices to lift $e_k$ to $\alpha_k$ in the corresponding quotient of the free Milnor group, $\alpha_k \in Q_k^p := FM_{k-1} \slash [FM_{k-1}]_k^p$. The notation $e_k$ intentionally suggests that it is the last component which is ``active.'' It is harmless to assume this given almost triviality and cyclic symmetry.

	Regarding the bound, its essential ingredient is that the order $\abs{Q_k^p} = O\left(p^{\left((a^\pr \epsilon^{-3})^{(k-1)}\right)}\right)$. The group $\pi_1(C_{ji})$ has $g = O(\epsilon^{-3})$ generators, so this also bounds the number of generators of the free Milnor group under consideration. The quotient $Q_k^p$ is $(k-1)$-stage nilpotent with at most $g^s$ new (twisted) $\Z p$ factors added by during the $s^{\text{th}}$-central extension. Thus, the total number of copies of $\Z_p$ twisted together to make the $p$-group $Q_{k-1}^p$ is $O(\epsilon^{-3})^{k-1}$, giving the order bound.

	Returning to the main line of the proof, we need:

	\begin{lemma}\label{lm:fm}
		Suppose $\beta \in [FM(m_1,\dots,m_{k-1})]_i^p$, $1 \leq i \leq k-1$, then $\operatorname{Magnus}(\beta)$ maps to $(1 + \textup{monomials of degree} \geq i)$ under reduction of coefficients $\Z \ra \Z_p$, inducing $R[x_1,\dots,x_{k-1}]$ $\xrightarrow{\pi_p} R_p[x_1,\dots,x_{k-1}] \coloneqq \Z_p[x_1,\dots,x_{k-1}] \slash (\textup{repeating ideal})$, i.e.\ $\pi_p(\operatorname{Magnus}(\beta)) = (1 + \textup{terms of degree}$ $\geq i)$.
	\end{lemma}

	\begin{proof}
		By induction. When $i=1$ the statement is that $p^\text{th}$ power has no linear terms when expanded into $R_p[x_1,\dots,x_{k-1}]$. Now assume that Lemma \ref{lm:fm} is true for $i-1$ and expand $a u a^{-1} u^{-1} v^p$, where $a \in FM(m_1,\dots,m_{k-1})$, and $u,v \in [FM(m_1, \dots, m_{k-1})]_{i-1}^p$. The lowest positive degree ($= i-1$) monomials in Magnus$(u)$ and Magnus$(u^{-1})$ are identical except for reversed signs; consequently, the $a u a^{-1} u^{-1}$ factor expands to $(1 + \text{monomials of degree} \geq i)$ as the degree $= i-1$ terms all cancel. The $v^p$ factor has the same form since the degree $i-1$ terms are now repeated $p$ times each. Consequently, the product $a u a^{-1} u^{-1} v^p$ also expands to this form.
	\end{proof}

	The mod $p$-lcs subgroups are characteristic: they map to each other under homomorphisms and if $F \twoheadrightarrow G$ is an epimorphism then $[F]_k^p$ maps epimorphically to $[G]_k^p$. Apply these facts to the maps:
	\begin{equation}
		\begin{tikzpicture}[baseline={(current bounding box.center)}]
			\node at (0,0) {$\pi_1(C_k) \ra \pi_1(I^3 \setminus e_1 \cup \cdots \cup e_{k-1}) \ra M(I^3 \setminus e_1 \cup \cdots \cup e_{k-1}) \leftarrow FM(I^3 \setminus e_1 \cup \cdots \cup e_{k-1})$};
			\node at (6.4,-0.6) {\footnotesize{Magnus}};
			\node[rotate=-90] at (5.7,-0.6) {$\longrightarrow$};
			\node at (5.7,-1.1) {$R_p[x_1,\dots,x_{k-1}]$};
			\node[rotate=90] at (-6.5,-0.65) {$\longrightarrow$};
			\node at (-6.7,-1.3) {$\beta \in [\pi_1(C_k)]_k^p$};
		\end{tikzpicture}
	\end{equation}
	and apply Lemma \ref{lm:fm} to conclude that any $\beta \in [\pi_1(C_k)]_{k-1}^p$ will Magnus expand to 1 in line (\ref{eq:magnus-expand}).

	Now consider a second version of a bespoke link homotopy, $(p,k)$-link-homotopy, in which components homotope (while maintaining disjointness), for convenience only move one at a time, and finally the active component (which our notation treats as the last component) is permitted at any moment to form an ambient connected sum with any loop $\beta \in [\pi_1(C_k)]_k^p$. The present analog of Lemma \ref{lm:vanishing-ln} is that the non-repeating length $k$ $\lbar{\mu}$-invariants of $L$ are invariant mod $p$ under $(p,k)$-link-homotopy. The proof is parallel to that of Lemma \ref{lm:vanishing-ln}, simply Magnus expand $e_k \# \beta \subset I^3 \setminus (e_1 \cup, \cdots \cup e_{k-1})$ into $R_p[x_1, \dots, x_{k-1}]$.

	The present analog of Lemma \ref{lm:3-link} is that any $k$-component $E^\pr$ $(p,k)$-link-homotopic to $E$ continues to have nontrivial, non-repeating, $\lbar{\mu}$ invariants of length $k$. This follows from Lemma \ref{lm:fm}, again by expanding $e_k \# \beta$ into $R_p[x_1, \dots, x_{k-1}]$. In particular, no such $E^\pr$ can be a split link.

	The proof of Theorem \ref{thm:n-max} is completed, once again, by an application of the pigeonhole principle. If $n_{\text{max}}(\epsilon)$ exceeds the cardinality of the decorate colorings $\{\hat{c}_j\}$, where now each colored region $C_{ji}$ is decorated by a conjugacy class of $[\pi_1(C_{ji})]_k^p$ representing the invariant information regarding the location of $e_{ji}$ inside $C_{ji}$, then for $j \neq j^\pr$, $E_j$ and $E_{j^\pr}$ will induce identical data. The number of decorated colorings is bounded by the number of possible colorings $\abs{\{c_j\}} = O\left((k+1)^{a \epsilon^{-3}}\right)$ times $\abs{Q^p_k}$ which bounds the choice of decoration, so $\abs{\{\hat{c}_j\}} = O\left((k+1)^{a\epsilon^{-3}}p^{{(a\epsilon^{-3})}^{k-1}}\right)$. We have just argued that this data suffices to reconstruct the nontrivial $(p,k)$-link-homotopy classes of both $E_j$ and $E_{j^\pr}$. This is as it should be. But now define $E_{jj^\pr}$ by starting with $E_j$ and exchanging any one of its components with the corresponding component of $E_{j^\pr}$. Exactly the same data now tells us that $E_{jj^\pr}$ has a non-vanishing, $\lbar{\mu}$-invariant of length $k$. This is a contradiction since $E_{jj^\pr}$ is a split link, split by the 2-sphere separating $E_j$ from $E_{j^\pr}$, so $\lbar{\mu}$-invariants of length $k$ vanish.
\end{proof}

\section{Discussion}
The use of $\Z_2$ coefficients in the initial homological disucssion was arbitrary, and any finite coefficient ring would suffice. However, for Theorem \ref{thm:finite-int}, the choice of the prime 3 was crucial. $p = 2$ would make the Burnside group abelian and provide no useful information. In this regard it is amusing to check that the Borromean rings is indeed 2-link-homotopy equivalent to the 3-component unlink ($\lbar{\mu}_{123}$ is \emph{not} conserved mod 2 under 2-link-homotopy). The restricted Burnside groups $B(n,k)$ are only known to be finite for $k = 2,3,4,$ and 6. The most general Theorem \ref{thm:n-max} exploits the interplay of the mod $p$-lcs with the $\lbar{\mu}$-invariants. While broadest, the estimate there is somewhat worse.

Our philosophy is that the upper bounds we offer, based on homology or $\lbar{\mu}$, are \emph{terrible}. Firstly, the estimates seem way too big, and second they only apply to links with easy algebraic features; boundary link and even the Whitehead link are left untouched. Our conjecture, a challenge to the reader, is that every non-trivial link $L$ of two or more components has an $\epsilon$-diagonal packing bound for the number of $\epsilon$-diagonally embedded copies of the form $\#_L(\epsilon) = O(\epsilon^{-3})$.

\section{Acknowledgements}
The question studied here arose while working with Michael Starbird on \cite{fs22}. An $O(\epsilon^{-3})$ bound for the Hopf link problem might offer an alternative proof strategy for that paper's main theorem. I would also like to thank Slava Krushkal for insightful discussions, and the referee for important clarifications.

\bibliography{references}

\end{document}